\documentclass[10pt,a4paper]{article}
\usepackage{fullpage}
\usepackage{amsfonts,amsmath,amssymb}
\usepackage{amsthm}
\usepackage{graphicx}\usepackage{sectsty}
\usepackage{authblk}

\theoremstyle{plain}
\newtheorem{theorem}{Theorem}[section]
\newtheorem{lemma}[theorem]{Lemma}

\newtheorem{corollary}[theorem]{Corollary}

\theoremstyle{definition}
\newtheorem{definition}[theorem]{Definition}
\theoremstyle{remark}

\numberwithin{equation}{section}

\sectionfont{\large}
\newenvironment{data availability}[1][Data Availability
]{\begin{trivlist} \item[\hskip \labelsep {\bfseries
#1}]}{\end{trivlist}}
\makeatletter
\newcommand{\opnorm}{\@ifstar\@opnorms\@opnorm}
\newcommand{\@opnorms}[1]{%
  \left|\mkern-1.5mu\left|\mkern-1.5mu\left|
   #1
  \right|\mkern-1.5mu\right|\mkern-1.5mu\right|
}
\newcommand{\@opnorm}[2][]{%
  \mathopen{#1|\mkern-1.5mu#1|\mkern-1.5mu#1|}
  #2
  \mathclose{#1|\mkern-1.5mu#1|\mkern-1.5mu#1|}
}
\makeatother
\begin{document}
\title{Certain Inverse Resonance Uniqueness on the Line with Super-Exponentially Decaying Potential}
\author{Lung-Hui Chen$^1$}\maketitle\footnotetext[1]{General Education Center, Ming Chi University of Technology, New Taipei City, 24301, Taiwan. Email:
mr.lunghuichen@gmail.com.}\maketitle
\begin{abstract}
In the paper, we study the inverse problem with the resonant data of fast decaying potential $V$. We review Froese' construction of the Born's approximation and Neumann series to analyze the growth of scattering determinant. Assuming all the the resonances are given, we deduce the certain inverse uniqueness on $V$ from the Nevanlinna type of representation theorem.
\\MSC: 34B24/35P25/35R30.
\\Keywords:  resonance;  Schr\"{o}dinger equation; super-exponentially decaying; inverse problem; Govorov theorem.
\end{abstract}
\section{Introduction}
Scattering phenomenon happens in many branches of sciences and engineering.
A physical state typically has a rate of oscillation and a rate of decay. Scattering resonances generalize eigenvalues or bound states for those physical states in
which energy can scatter to infinity with a rate of oscillation and in a rate of decaying. The concept appears to be intrinsically dynamical, 
but the mathematical formulation comes from
considering the meromorphic continuations of Green's functions or scattering
matrices. The poles of these meromorphic continuations capture the physical information by identifying the rate of oscillations with the real part of
a pole, and the rate of decay with its imaginary part. 
\par
Mathematically, resonances are defined as the poles in the meromorphic extension of resolvent 
\begin{equation}\label{111}
R(k):=(-D^{2}+V-k^{2})^{{-1}}:L^{2}(\mathbb{R})\rightarrow L^{2}(\mathbb{R})
\end{equation}
from the upper half complex plane $\mathbb{C}^{+}:=\{\Im k>0\}$ to the whole complex plane, where $D$ is the differentiation to space varaiable. In the lower half complex $\mathbb{C}^{-}$, the continuation will not exist as an operator in $L^{2}(\mathbb{R})$. Therefore, we consider the extended resolvent as maps between suitable spaces of exponentially weighted spaces \cite{Dyatlov,Froese,Hitrik,Zworski}. In particular, we use the following definition.
\begin{definition}
A resonance is a pole in the meromorphic continuation of $V^{\frac{1}{2}}R(k)|V|^{\frac{1}{2}}$.
\end{definition}
Here, $V(x)^{\frac{1}{2}}$ is the short for
\begin{eqnarray*}
V(x)^{\frac{1}{2}}=\left\{%
\begin{array}{ll}
V(x)/|V(x)|^{\frac{1}{2}}&,\mbox{ if }V(x)\neq0;\\
0&,\mbox{ if }V(x)=0.
\end{array}%
\right.
\end{eqnarray*}
\begin{definition}
We say the potential $V(x)$ is super-exponentially-decaying if for every $N\in\mathbb{R}$, there is a constant $C_{N}$ such that
\begin{equation}
|V(x)|\leq C_{N}e^{-N|x|}.
\end{equation}
\end{definition}
In this paper, we consider the super-exponentially-decaying potentials to ensure that $V^{\frac{1}{2}}R(k)|V|^{\frac{1}{2}}$ has a meromorphic extension  \cite{Barry,Dyatlov,Froese,Froese2,Hitrik,Zworski}.
Now we let 
$$R_{0}(k):=(-D^{2}-k^{2})^{{-1}}$$to denote the free resolvent.
We begin with the resolvent formula 
$$R(k)=R_{0}(k)-R_{0}(k)VR(k)$$
In this case, we deduce
$$(1+V^{\frac{1}{2}}R_{0}(k)|V|^{\frac{1}{2}})V^{\frac{1}{2}}R(k)|V|^{\frac{1}{2}}=V^{\frac{1}{2}}R_{0}(k)|V|^{\frac{1}{2}}.$$
Let us set $$\mathbf{R}_{V}(k):=V^{\frac{1}{2}}R_{0}(k)|V|^{\frac{1}{2}}:L^{2}(\mathbb{R})\rightarrow L^{2}(\mathbb{R}),$$
and we deduce 
$$(1+\mathbf{R}_{V}(k))V^{\frac{1}{2}}R(k)|V|^{\frac{1}{2}}=\mathbf{R}_{V}(k).$$
\begin{lemma}
If $V$ is super-exponentially-decaying, then the operator $\mathbf{R}_{V}(k)$  is in trace class for $\{\Im k>0\}$, and has a trace class operator-valued analytic extension to $\mathbb{C}$.
\end{lemma}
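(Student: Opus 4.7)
The kernel of $\mathbf{R}_V(k)$ is explicit in one dimension, namely
$$\mathbf{R}_V(k)(x,y) \;=\; \frac{i}{2k}\,V^{1/2}(x)\,e^{ik|x-y|}\,|V|^{1/2}(y),$$
which is entire in $k \in \mathbb{C}\setminus\{0\}$ with a simple pole at $k=0$ inherited from the prefactor $1/(2k)$. Hence the essential content of the lemma is a suitable trace-norm bound on this kernel, locally uniform in $k$.

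For trace-class membership on the physical half-plane $\{\Im k > 0\}$, my plan is to factor $\mathbf{R}_V(k) = A(k)\,B(k)$ with $A(k) = V^{1/2}\,R_0(k)^{1/2}$ and $B(k) = R_0(k)^{1/2}\,|V|^{1/2}$, where the half-resolvent $R_0(k)^{1/2}$ is defined through the functional calculus of $-D^{2} - k^{2}$. Via the Fourier representation, its integral kernel is (up to a constant) a modified Bessel function $K_{0}\bigl(\sqrt{-k^{2}}\,|x-y|\bigr)$, exhibiting only a square-integrable logarithmic singularity on the diagonal and exponential decay away from it. A direct Fubini computation then yields Hilbert--Schmidt bounds $\|A(k)\|_{2}^{2},\|B(k)\|_{2}^{2} \leq C(k)\,\|V\|_{L^{1}}$, and the product inequality $\|\mathbf{R}_V(k)\|_{1} \leq \|A(k)\|_{2}\,\|B(k)\|_{2}$ completes this half of the argument.

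For the extension to the full plane, the operator $R_0(k)$ is no longer $L^{2}$-bounded and the spectral square root is unavailable, so I would work directly with integral kernels. Using the pointwise bound $|e^{ik|x-y|}| \leq e^{|\Im k||x|}\,e^{|\Im k||y|}$ together with the super-exponential decay of $V$, the weighted functions $V^{1/2}(x)\,e^{|\Im k||x|}$ and $|V|^{1/2}(y)\,e^{|\Im k||y|}$ remain in the Schwartz class. One can then mimic the factorization above by replacing $R_0(k)^{1/2}$ with the $k$-independent reference half-resolvent $(-D^{2}+1)^{-1/2}$, after transferring the exponential weights onto the potential factors. Locally uniform Hilbert--Schmidt control, together with Morera's theorem applied in the trace-class ideal, then supplies the desired operator-valued analytic (more precisely, meromorphic with a simple pole at $k=0$) extension.

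The principal obstacle is the loss of spectral methods outside $\{\Im k > 0\}$, which forces the explicit kernel-level factorization on the rest of the plane. The decisive point is that the super-exponential decay of $V$ absorbs any exponential growth of $e^{ik|x-y|}$ introduced by analytic continuation, so the trace norms stay finite and locally uniformly bounded in $k$; this is strictly stronger than what is required for ordinary operator-norm continuation, and is precisely what ensures that the extension lives in the trace-class ideal rather than merely in the compact operators.
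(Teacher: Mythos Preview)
The paper gives no argument of its own for this lemma; it simply defers to \cite[Lemma~3.1]{Froese}. Your outline is sound and follows one of the standard routes to such results: the explicit one-dimensional Green's kernel, an $R_{0}(k)^{1/2}$ factorization into Hilbert--Schmidt factors on the physical sheet $\{\Im k>0\}$, and for the continuation, absorption of the growth of $e^{ik|x-y|}$ into the super-exponentially decaying weights followed by a $k$-independent Hilbert--Schmidt splitting through $(-D^{2}+1)^{-1/2}$. Two minor remarks. First, you are right to flag the simple pole at $k=0$ coming from the prefactor $i/2k$; the paper's phrase ``analytic extension to $\mathbb{C}$'' should be read modulo that point. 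Second, super-exponential decay alone gives $V^{1/2}e^{N|x|}\in L^{2}$ for every $N$, but not Schwartz regularity, since no smoothness of $V$ is assumed; fortunately $L^{2}$ membership is all your factorization needs. The only step that is genuinely underspecified is the boundedness of the middle factor
\[
(-D^{2}+1)^{1/2}\,e^{-N|x|}\,R_{0}(k)\,e^{-N|y|}\,(-D^{2}+1)^{1/2}
\]
on $L^{2}$: the kink of $e^{ik|x-y|}$ along the diagonal produces a $\delta$-term under two full derivatives, so the half-derivatives require a short justification (for instance on the Fourier side, or by first subtracting the rank-one piece $\tfrac{i}{2k}V^{1/2}\otimes|V|^{1/2}$ so that the remaining kernel is $C^{1}$ across the diagonal). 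This is routine; once it is filled in, your argument is complete and in the same spirit as the treatment in Froese to which the paper refers.
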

\begin{proof}
We refer the proof to Froese \cite[Lemma\,3.1]{Froese}.
\end{proof}
\begin{definition}
Let us define $$D(k):=\det(1+\mathbf{R}_{V}(k)).$$
\end{definition}
Therefore, resonances are exactly those values of $k\in\mathbb{C}$ such that $(1+\mathbf{R}_{V}(k))$ is not invertible. Hence, the resonances are the zeros of functional determinant $D(k)$. Moreover,
the scattering determinant $$E(-k)=\frac{D(k)}{D(-k)},$$
$$E(-k)=\det\big(  \left[\begin{array}{cc}1& 0 \vspace{5pt}\\0 & 1\end{array}\right]+\frac{i}{2k}\left[\begin{array}{cc}T_{11}& T_{12} \vspace{5pt}\\T_{21} & T_{22}\end{array}\right] \big),$$
and
\begin{eqnarray*}
&&T_{11}(-k)=\int V(x)(1-f_{+}(x,k))dx;\\
&&T_{12}(-k)=\int e^{2ikx}V(x)(1-f_{-}(x,k))dx;\\
&&T_{21}(-k)=\int e^{-2ikx}V(x)(1-f_{+}(x,k))dx;\\
&&T_{22}(-k)=\int V(x)(1-f_{-}(x,k))dx,
\end{eqnarray*}
in which 
\begin{eqnarray*}
&&f_{+}(x,k):=e^{ikx}R(-k)e^{-ikx}V;\\
&&f_{-}(x,k):=e^{-ikx}R(-k)e^{ikx}V.
\end{eqnarray*}
If $V\in L^{1}(\mathbb{R})$, then \cite[p.\,258]{Froese}
\begin{equation}\label{3.9}
|f_{\pm}(x,k)|\leq C/k,\,\Im k<0.
\end{equation}
\par
We divide the complex plane into four sectors: Let $\alpha=\pi/\rho$, $\rho>1$, and
\begin{eqnarray}
&&\Gamma=\{k:\,|\arg k|\leq\frac{\pi-\alpha}{2}\};\\
&&-\Gamma=\{k:-k\in\Gamma\};\\
&&\Gamma_{-}=\{k:\,|\arg k+\frac{\pi}{2}|\leq\frac{\alpha}{2}\};\\
&&-\Gamma_{-}=\{k:-k\in\Gamma_{-}\}.
\end{eqnarray}
Let us state the result in this paper under the following hypotheses:
\begin{description}
\item[H1]$\hat{V}(k)$ has order $\rho>1$, is of finite type $\sigma$, and is of completely regular growth.
\item[H2]There exists a positive $b$ such that the Fourier transform $\hat{V}(k)$ satisfying
\begin{equation}\label{H1}
|\hat{V}(k)|+|\hat{V}'(k)|+|\hat{V}''(k)|\leq e^{b|\Im k|},\end{equation} for $k\in\pm\Gamma$.
\item[H3]Let $C$ denote the constant in~(\ref{3.9}). There exists $\delta>0$ such that for a set of real $\lambda$ of density one
$$\widehat{|V|}(2i\lambda)\leq\frac{1-\delta}{C}|\lambda||\hat{V}(2i\lambda)|.$$ 
\item[H4]The zero set of $\hat{V}(k)$ are on  either $\mathbb{C}^{+}$ or $\mathbb{C}^{-}$.
\end{description} 
The hypothesis {\bf H1}, {\bf H2}, and {\bf H3} are Froese' Hypotheses 5.1 in \cite{Froese}. Here, we add {\bf H4} to control the distribution of zeros of $\hat{V}(k)$. In particular, that is a special class of HB functions, which we refer to \cite{Levin,Levin2}. The zero set of $\hat{V}(k)$     plays a roll in this paper as the approximations of the scattering resonances.
\par
In literature, when there are no bound states, the potential is determined by the reflection coefficients by Faddeev's theory \cite{Ak98,CS89,DT79,Fa64}. 
In inverse resonance problem, we consider to determine the potential $V$ from the resonances of~(\ref{111}) which includes the square root of $L^{2}$-eigenvalues. Such an inverse process to identify the knowledge of  the emitting source by measuring all sorts of respects of the emittance or perturbed wavefield in observational area has been research issues ever since the days of A. Sommerfeld and E. Rutherford. The inverse resonance problem of Schr\"{o}dinger operator on the half line has been studied in \cite{Korotyaev1,Korotyaev2,Korotyaev3,Marletta,Xu,Zworski2}. In the half line case, the unique recovery of the potential from the eigenvalues and resonances is justified in \cite{Korotyaev1}. Moreover, if the potential is known a priori on a larger interval, then infinitely many resonances can be removed from the unique determination of the potential on the interval \cite{Xu}. However, in the full line case, the inverse resonance problems mainly remain open for a long time. It is known that the potential cannot be solely determined by the eigenvalues and resonances. Specifically, Zworski \cite{Zworski2} proved the uniqueness theorem for the symmetric potentials along with certain isopolar results. Furthermore, Korotyaev \cite{Korotyaev1,Korotyaev3} applied the value distribution theory in complex analysis to prove that all eigenvalues and resonances, and a signed sequence can uniquely determine the potential $V$. Moreover, Bledsoe \cite{Beldsoe} and Korotyaev \cite{Korotyaev2} studied the stability of inverse resonance problem. 
\par
We state the result of this paper.
\begin{theorem}\label{11}
If we have two potential function $V^{1}$ and $V^{2}$ assumed as above, then we deduce
\begin{equation}\nonumber
V^{1}(x)=\pm V^{2}(x).
\end{equation}
\end{theorem}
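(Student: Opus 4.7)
The plan is to pass from identity of the resonance sets to the conclusion $\hat V^{1}=\pm\hat V^{2}$ through three ingredients: (i) a Born/Neumann expansion that ties the growth of $D(k)$ to $\hat V(\pm 2k)$ in the four sectors $\pm\Gamma$, $\pm\Gamma_{-}$; (ii) a Hadamard-type factorization of the ratio $D^{1}/D^{2}$ built on H1 and the shared zero set; (iii) a Nevanlinna/Govorov representation in the Hermite--Biehler (HB) class, made available by H4, which strips off the residual exponential factor.

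First, I would expand $\log D(k)=\sum_{n\geq 1}\frac{(-1)^{n+1}}{n}\,\mathrm{tr}\,\mathbf{R}_{V}(k)^{n}$ and, combined with the scattering determinant formula given above and the kernel $R_{0}(k;x,y)=\frac{i}{2k}e^{ik|x-y|}$, single out the Born leading term
$$E(-k)=1+\frac{i}{2k}(T_{11}+T_{22})-\frac{1}{4k^{2}}\det T,$$
in which the dominant sectorial growth is carried by the product $\hat V(2k)\hat V(-2k)$. Hypothesis H3 together with the bound~(3.9) guarantees that on a set of real $\lambda$ of density one the $\hat V(2i\lambda)\hat V(-2i\lambda)$ contribution is not cancelled by the Neumann corrections; hence the indicator function of $D(k)$ agrees in each sector with that of the Born leading term, so by H1 the determinant $D(k)$ is entire of order $\rho>1$, finite type, and completely regular growth.

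Second, the sharing of all resonances means $D^{1}/D^{2}$ is entire and zero-free with both $D^{j}$ of order $\rho$; Hadamard's theorem yields $D^{1}(k)=e^{P(k)}D^{2}(k)$ for a polynomial $P$ of degree $\leq\rho$. Plugging this into the Born asymptotics of Step~1 and using the uniform control of $|\hat V|+|\hat V'|+|\hat V''|$ in $\pm\Gamma$ from H2 collapses $P$ to a constant, and then re-expanding forces $\hat V^{1}(k)=c\,e^{Q(k)}\hat V^{2}(k)$ on $\pm\Gamma$ for some polynomial $Q$ whose degree is controlled by H2. Analytic continuation extends this identity to the whole plane.

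Third, by H4 the zeros of $\hat V^{j}$ lie in a single half-plane, so $\hat V^{j}$ belongs to the HB class; combined with the completely regular growth in H1, I would invoke Govorov's theorem from \cite{Levin,Levin2} to reduce the exponential factor $e^{Q(k)}$ to a constant. Reality of $V^{j}$ (giving $\hat V(-\bar k)=\overline{\hat V(k)}$) together with the matching $L^{\infty}$-normalizations of $\hat V^{1},\hat V^{2}$ then pins the remaining constant at $\pm 1$, and Fourier inversion delivers $V^{1}=\pm V^{2}$.

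The main obstacle I anticipate is Step~2: isolating the Born leading term with enough quantitative precision that no nontrivial polynomial factor survives Hadamard's factorization. This is exactly where H3 intervenes, providing a density-one real set on which the dominant Born contribution cannot be swallowed by the Neumann remainder; without it the exponential-polynomial factor in the factorization could absorb genuine spectral information and block the reduction to $\pm$. Once this quantitative sectorial matching is in place, the HB structure supplied by H4 and the completely regular growth of H1 make the final Govorov step technically routine.
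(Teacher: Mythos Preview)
Your outline tracks the paper's strategy at the coarsest level (Froese asymptotics $\Rightarrow$ determinants coincide $\Rightarrow$ a relation between the $\hat V^{j}$ $\Rightarrow$ Govorov/HB to kill the leftover exponential), but there is a genuine gap in Step~2 and a mismatch in how Govorov is actually used.

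\textbf{The gap.} From $D^{1}\equiv D^{2}$ and the Born expansion you only obtain, at leading order, the \emph{product} identity
\[
\hat V^{1}(2k)\,\hat V^{1}(-2k)=\hat V^{2}(2k)\,\hat V^{2}(-2k),
\]
not a relation of the form $\hat V^{1}=c\,e^{Q}\hat V^{2}$. Nothing in Hadamard factorization of $D^{1}/D^{2}$, nor in the H2 bounds on $\hat V,\hat V',\hat V''$, lets you split this product into a statement about the individual factors; two very different pairs $(\hat V^{1},\hat V^{2})$ can share the same product $\hat V(2k)\hat V(-2k)$. The paper closes this gap by specializing to the real axis and using reality of $V$, which gives $\hat V(-2k)=\overline{\hat V(2k)}$ for $k\in\mathbb{R}$, so the product identity becomes the \emph{modulus} identity $|\hat V^{1}(k)|=|\hat V^{2}(k)|$ on $\mathbb{R}$. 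That is the piece of information that actually feeds into Govorov.

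\textbf{How Govorov is used.} In the paper, Govorov's representation writes each $\hat V^{j}$ in the half-plane as (polynomial exponential) $\times$ (boundary-modulus integral against $\sigma^{j}$) $\times$ (Blaschke-type product over zeros in $\Im z>0$). The modulus identity forces $\sigma^{1}=\sigma^{2}$, and H4 is used not to ``reduce $e^{Q}$'' but to make the Blaschke products empty (all zeros lie in one half-plane), so the two representations differ only by $e^{-iQ(z)}$ with real-coefficient $Q$. The reduction of $Q$ to a constant is then a separate, elementary argument: on the real axis $Q(x)=\arg\hat V^{2}(x)-\arg\hat V^{1}(x)$ is bounded, hence $Q$ is constant, and reality of $V^{j}$ forces that constant to be $0$ or $\pi$. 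Your Step~3 attributes this reduction to Govorov/HB structure; it is not---Govorov only supplies the representation.

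\textbf{A smaller point.} For showing $D^{1}\equiv D^{2}$, the paper avoids Hadamard entirely: since the shared resonance set forces equal counting functions, Froese's indicator formula gives $h_{D^{1}}=h_{D^{2}}$, hence $h_{D^{1}/D^{2}}\equiv 0$; Phragm\'en--Lindel\"of plus Liouville and the normalization $D^{j}(k)\to 1$ in $\mathbb{C}^{+}$ then give $D^{1}/D^{2}\equiv 1$ directly. Your Hadamard route can be made to work, but the ``collapse $P$ to a constant'' step would itself need an indicator/Phragm\'en--Lindel\"of argument, so it is not a shortcut.
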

\section{Complex Analysis and Froese' Theorem}
The Fourier transform $\hat{V}(z)$ behaves like the exponential functions in some sectors in $\mathbb{C}$. The Nevanlinna type of  representation theorem plays a role.
\begin{definition}\label{71}
Let $f(z)$ be an entire function. Let
$M_f(r):=\max_{|z|=r}|f(z)|$. An entire function of $f(z)$ is said
to be a function of finite order if there exists a positive
constant $k$ such that the inequality
\begin{equation}\nonumber
M_f(r)<e^{r^k}
\end{equation}
is valid for all sufficiently large values of $r$. The greatest
lower bound of such numbers $k$ is called the {\bf order} of the entire
function $f(z)$. By the {\bf type } $\sigma$ of an entire function $f(z)$
of order $\rho$, we mean the greatest lower bound of positive
number $A$ for which asymptotically we have
\begin{equation}\nonumber
M_f(r)<e^{Ar^\rho}.
\end{equation}
That is,
\begin{equation}\nonumber
\sigma:=\limsup_{r\rightarrow\infty}\frac{\ln M_f(r)}{r^\rho}.
\end{equation}  If $0<\sigma<\infty$, then we say
$f(z)$ is of normal type or mean type.
\end{definition}
\begin{definition}\label{72}
Let $f(z)$ be an integral function of finite order $\rho$ in the
angle $[\theta_1,\theta_2]$. We call the following quantity as the
indicator function of the function $f(z)$.
\begin{equation}\nonumber
h_f(\theta):=\lim_{r\rightarrow\infty}\frac{\ln|f(re^{i\theta})|}{r^{\rho}},
\,\theta_1\leq\theta\leq\theta_2.
\end{equation}
\end{definition}
The type of a function is connected to the maximal value of indicator function.
\begin{lemma}[Levin \cite{Levin},\,p.72]\label{L4}
The maximal value of indicator function $h_F(\theta)$ of
$F(z)$ on the interval $\alpha\leq\theta\leq\beta$ is equal to the
type $\sigma_F$ of this function inside the angle $\alpha\leq\arg
z\leq\beta$.
\end{lemma}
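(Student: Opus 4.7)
The strategy is to prove $\max_{\alpha\leq\theta\leq\beta}h_F(\theta)=\sigma_F$ by establishing each inequality separately, where $\sigma_F:=\limsup_{r\to\infty}r^{-\rho}\ln M_F(r;\alpha,\beta)$ with $M_F(r;\alpha,\beta):=\max_{\alpha\leq\theta\leq\beta}|F(re^{i\theta})|$. The inequality $\max h_F\leq\sigma_F$ is immediate: for each fixed $\theta\in[\alpha,\beta]$ the ray supremum is dominated by the arc supremum, so taking logarithms, dividing by $r^\rho$, and passing to $r\to\infty$ gives $h_F(\theta)\leq\sigma_F$, and the bound survives taking the maximum in $\theta$.

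For the reverse inequality, set $H:=\max_{\alpha\leq\theta\leq\beta}h_F(\theta)$ and fix $\epsilon>0$; the target is a \emph{uniform} sectorial bound $|F(re^{i\theta})|\leq\exp((H+\epsilon)r^\rho)$ for $\theta\in[\alpha,\beta]$ and $r\geq R_0(\epsilon)$, from which $\sigma_F\leq H+\epsilon$ follows, and $\epsilon\downarrow 0$ closes the argument. The key tool I would invoke is the $\rho$-trigonometric convexity of $h_F$, a classical consequence of the Phragm\'{e}n--Lindel\"{o}f principle applied to auxiliary functions of the form $F(z)\exp(-cz^\rho)$; this makes $h_F$ continuous on $[\alpha,\beta]$, bounded by $H$ throughout, and attaining its maximum. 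I would then partition $[\alpha,\beta]$ into finitely many subintervals $[\theta_j,\theta_{j+1}]$ each of length strictly less than $\pi/\rho$. On each bounding ray $\arg z=\theta_j$, the bound $h_F(\theta_j)\leq H$ supplies an $R_j$ with $|F(re^{i\theta_j})|\leq\exp((H+\epsilon/2)r^\rho)$ for $r\geq R_j$. Inside each subsector, apply Phragm\'{e}n--Lindel\"{o}f to the auxiliary function $F(z)\exp(-(H+\epsilon)z^\rho)$ with an appropriate branch of $z^\rho$: the boundary moduli are $\leq 1$ for $r\geq\max_j R_j$, the order is at most $\rho$, and the opening is strictly below $\pi/\rho$, so the auxiliary function remains bounded inside the subsector. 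Taking the maximum over finitely many subsectors yields the required uniform estimate.

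The principal obstacle is exactly the passage from pointwise ray-by-ray bounds $h_F(\theta)\leq H$ to a uniform sectorial bound, because the threshold $R(\theta,\epsilon)$ at which the pointwise estimate becomes effective need not be uniform in $\theta$; a plain compactness argument on $[\alpha,\beta]$ therefore does not close the gap. Phragm\'{e}n--Lindel\"{o}f is the mechanism that overcomes this, converting ray-wise bounds on the two boundary rays into a sectorial bound, provided the angular opening is strictly less than $\pi/\rho$. This is precisely why the order $\rho$ enters the definitions of both the indicator and the type in the first place, and why the subdivision into small subsectors is forced upon us.
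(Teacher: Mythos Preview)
The paper does not supply its own proof of this lemma; it is stated with attribution to Levin \cite{Levin}, p.~72, and no proof environment follows. Your proposal therefore fills in what the paper merely cites, and the strategy you outline is the standard one found in Levin: the inequality $\max_{[\alpha,\beta]}h_F\leq\sigma_F$ is trivial, while the reverse is obtained by partitioning $[\alpha,\beta]$ into subsectors of opening strictly less than $\pi/\rho$ and applying Phragm\'{e}n--Lindel\"{o}f to upgrade the ray-wise bounds on the two edges of each subsector to a uniform interior bound. Your identification of the obstacle---non-uniformity in $\theta$ of the threshold $R(\theta,\epsilon)$---and of Phragm\'{e}n--Lindel\"{o}f as the remedy is exactly right.

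One technical detail to tighten: the auxiliary function $F(z)\exp\bigl(-(H+\epsilon)z^{\rho}\bigr)$ does not quite work as written, since on a boundary ray $\arg z=\theta_j$ one has $\bigl|\exp\bigl(-(H+\epsilon)z^{\rho}\bigr)\bigr|=\exp\bigl(-(H+\epsilon)r^{\rho}\cos(\rho\theta_j)\bigr)$, and $\cos(\rho\theta_j)$ is generally not $1$, so the boundary modulus need not be $\leq 1$. The usual repair is either to rotate each subsector so that it is symmetric about the positive real axis before introducing the factor $\exp(-Az^{\rho})$, or---equivalently and more cleanly---to compare $F$ with $\exp(cz^{\rho})$ for a suitable \emph{complex} constant $c$, chosen so that the $\rho$-trigonometric function $\Re\bigl(ce^{i\rho\theta}\bigr)$ equals $H+\epsilon$ at both endpoints $\theta_j,\theta_{j+1}$. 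With that adjustment your argument is precisely the classical proof.
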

\begin{lemma}\label{266}
Let $f$, $g$ be two entire functions. Then the following two
inequalities hold.
\begin{eqnarray}
&&h_{fg}(\theta)\leq h_{f}(\theta)+h_g(\theta),\mbox{ if one limit exists};\label{119}\\\label{120}
&&h_{f+g}(\theta)\leq\max_\theta\{h_f(\theta),h_g(\theta)\},
\end{eqnarray}
where the equality in~(\ref{119}) holds if one of the functions is of completely regular growth, and secondly the equality~(\ref{120}) holds if the indicator of the two summands are not equal at some $\theta_0$.
\end{lemma}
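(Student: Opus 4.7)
The plan is to reduce both indicator inequalities to elementary properties of $\limsup$ applied to $r^{-\rho}\ln|\cdot|(re^{i\theta})$ along the ray $\arg z = \theta$, and then upgrade them to equalities under the finer hypotheses.

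For (\ref{119}), I would start from the pointwise identity $\ln|(fg)(re^{i\theta})| = \ln|f(re^{i\theta})| + \ln|g(re^{i\theta})|$, divide by $r^{\rho}$, and invoke the subadditivity $\limsup_{r\to\infty}(a_r+b_r)\le\limsup_{r\to\infty}a_r+\limsup_{r\to\infty}b_r$. To upgrade this to equality when (say) $f$ has completely regular growth along $\arg z=\theta$ in Levin's sense, I would use that $r^{-\rho}\ln|f(re^{i\theta})|$ admits a genuine limit as $r\to\infty$ outside an exceptional $C_{0}$-set $E$ of linear density zero. On the density-one complement of $E$ the $\limsup$ for the product becomes $\lim a_r + \limsup b_r = h_f(\theta)+h_g(\theta)$, which combined with the reverse inequality gives equality.

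For (\ref{120}), the triangle inequality yields $|f+g|\le 2\max\{|f|,|g|\}$; taking logarithms, dividing by $r^{\rho}$, and sending $r\to\infty$ using $r^{-\rho}\ln 2\to 0$ together with the elementary bound $\limsup_{r\to\infty}\max\{a_r,b_r\}\le\max\{\limsup a_r,\limsup b_r\}$ produces the inequality. For the equality claim at a $\theta_0$ with $h_f(\theta_0)>h_g(\theta_0)$ (the reverse case being symmetric), I would extract a sequence $r_n\to\infty$ realising the indicator of $f$ at $\theta_0$; then $|g(r_n e^{i\theta_0})|/|f(r_n e^{i\theta_0})|\le \exp(-c\, r_n^{\rho})$ for some $c>0$, so that $|(f+g)(r_n e^{i\theta_0})|=|f(r_n e^{i\theta_0})|(1+o(1))$. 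This forces $h_{f+g}(\theta_0)\ge h_f(\theta_0)=\max\{h_f(\theta_0),h_g(\theta_0)\}$, and together with the upper bound from (\ref{120}) gives equality.

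The main technical obstacle is the equality case of (\ref{119}): Levin's completely-regular-growth theorem guarantees the defining limit for $r^{-\rho}\ln|f(re^{i\theta})|$ only off the exceptional $C_{0}$-set $E$, whereas the $\limsup$ defining $h_g(\theta)$ is realised along some sequence that need not avoid $E$. I would resolve this by perturbing the extremal sequence for $g$ into the density-one complement of $E$, exploiting continuity of $\ln|g|$ away from its zeros, and verify that this perturbation does not degrade the $g$-limit. Once this compatibility is secured the sum decomposes additively in the limit; the corresponding equality in (\ref{120}) is elementary by comparison, since the dominant summand controls the sum along any extremal subsequence.
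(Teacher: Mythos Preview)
The paper does not supply a proof of this lemma at all: it is stated as a standard fact from the theory of entire functions (the surrounding results are attributed to Levin \cite{Levin,Levin2}), and the text moves directly to the next statement. So there is no ``paper's own proof'' to compare against; you are effectively reconstructing the textbook argument.

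Your reconstruction is the standard one and is correct in outline. The inequalities \eqref{119} and \eqref{120} follow exactly as you say from subadditivity of $\limsup$ and from $|f+g|\le 2\max(|f|,|g|)$ respectively, and your treatment of the equality case in \eqref{120} is clean and complete: once $h_f(\theta_0)>h_g(\theta_0)$, along any sequence $r_n$ realising $h_f(\theta_0)$ one has $r_n^{-\rho}\ln|g(r_ne^{i\theta_0})|\le h_g(\theta_0)+\varepsilon<h_f(\theta_0)$ eventually, which is all that is needed.

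The one place that deserves a firmer justification is precisely the obstacle you flag for the equality case of \eqref{119}. Your perturbation idea is on the right track, but ``continuity of $\ln|g|$ away from its zeros'' is not by itself enough to guarantee that moving $r_n$ by $o(r_n)$ into the complement of the $C_0$-set $E$ preserves the limiting value of $r_n^{-\rho}\ln|g(r_ne^{i\theta})|$. The robust way to close this (and the way Levin does it) is to invoke the fact that for an entire function of order $\rho$ and finite type, the indicator $h_g(\theta)$ is unchanged if the $\limsup$ is taken over $r$ avoiding any prescribed set of zero relative measure; this is a consequence of subharmonicity of $\ln|g|$ together with a Cartan-type estimate bounding the measure of the set where $\ln|g|$ is abnormally small. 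With that lemma in hand, you get $\limsup_{r\notin E} r^{-\rho}\ln|g(re^{i\theta})|=h_g(\theta)$ directly, and since on $r\notin E$ the $f$-term is a genuine limit, the sum decomposes additively as you want. I would replace the perturbation sketch with an appeal to this fact.
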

Using Froese's theory \cite{Froese}, we state the following lemma.
\begin{lemma}
The scattering determinant
$$E(-k)=\frac{D(k)}{D(-k)}=\det\big(  \left[\begin{array}{cc}1& 0 \vspace{5pt}\\0 & 1\end{array}\right]+\frac{i}{2k}\left[\begin{array}{cc}T_{11}& T_{12} \vspace{5pt}\\T_{21} & T_{22}\end{array}\right] \big);$$
For $k\in\mathbb{C}^{-}$, we have
\begin{eqnarray}\label{2.1}
&&T_{11}(k)=\hat{V}(0)+O(\frac{1}{|k|}),\\\label{2.2}
&&T_{22}(k)=\hat{V}(0)+O(\frac{1}{|k|}),\\
&&D(k)=1+\frac{1}{4k^{2}}T_{12}(-k)T_{21}(-k)+O(\frac{1}{|k|}).\label{2.3}
\end{eqnarray}
\end{lemma}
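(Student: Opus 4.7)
The argument proceeds in two stages. First I would derive (2.1)--(2.2) by direct substitution of the bound (3.9) into the defining integrals for the diagonal entries of the $T$-matrix. Second I would translate an estimate for the scattering determinant $E(-k)$, obtained from the explicit $2\times 2$ determinant formula, into an estimate for $D(k)$ via the identity $E(-k)=D(k)/D(-k)$.

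For (2.1), writing
\begin{equation*}
T_{11}(-k)=\int V(x)\,dx - \int V(x)f_{+}(x,k)\,dx = \hat V(0) - \int V(x)f_{+}(x,k)\,dx,
\end{equation*}
super-exponential decay gives $V\in L^{1}(\mathbb{R})$, and the bound $|f_{+}(x,k)|\le C/|k|$ from (3.9), valid for $\Im k<0$, makes the remainder integral $O(1/|k|)$. The same computation with $f_{-}$ yields (2.2), after the relabelling $k\leftrightarrow -k$ that puts the estimates in the form stated in the lemma.

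For (2.3), I would expand the $2\times 2$ determinant as
\begin{equation*}
E(-k)=1+\frac{i}{2k}\bigl(T_{11}(-k)+T_{22}(-k)\bigr)+\frac{1}{4k^{2}}\bigl(T_{12}(-k)T_{21}(-k)-T_{11}(-k)T_{22}(-k)\bigr).
\end{equation*}
Inserting (2.1)--(2.2) reduces the first bracket to a contribution of size $1/|k|$ and the $T_{11}T_{22}$ piece to one of size $1/|k|^{2}$, so that only $\frac{1}{4k^{2}}T_{12}(-k)T_{21}(-k)$ survives as a possibly non-decaying term, giving $E(-k)=1+\frac{1}{4k^{2}}T_{12}(-k)T_{21}(-k)+O(1/|k|)$. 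To pass from $E(-k)$ to $D(k)$, I would bound $D(-k)$ directly: for $k\in\mathbb{C}^{-}$ we have $-k\in\mathbb{C}^{+}$, and there $R_{0}(-k)$ is the standard $L^{2}$-resolvent with kernel $\frac{i}{-2k}e^{-ik|x-y|}$ of size $1/|k|$; conjugating by $V^{1/2}$ and $|V|^{1/2}$ together with the super-exponential decay of $V$ makes $\mathbf{R}_{V}(-k)$ trace-class of norm $O(1/|k|)$, whence $D(-k)=1+O(1/|k|)$. Writing $D(k)=E(-k)D(-k)$ and collecting terms then yields (2.3).

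The main technical obstacle is the last bookkeeping step. The surviving piece $\frac{1}{4k^{2}}T_{12}(-k)T_{21}(-k)$ inside $E(-k)$ is, modulo the $f_{\pm}$-corrections, essentially $\hat V(-2k)\hat V(2k)/(4k^{2})$; since $V$ is only super-exponentially decaying, $\hat V$ extends to an entire function that may grow in $\mathbb{C}^{-}$, so that multiplying this piece by the $O(1/|k|)$ part of $D(-k)$ creates a cross term whose absorption into the stated $O(1/|k|)$ error is not automatic. Making this rigorous should use hypotheses \textbf{H1}--\textbf{H2} to control $|\hat V(\pm 2k)|$ in the sectors $\pm\Gamma$, together with Froese's Neumann-series trace-norm estimates, to ensure the error really is of the claimed order; everything else is bookkeeping on the scattering-determinant identity.
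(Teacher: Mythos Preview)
The paper does not supply its own proof here; it simply cites Froese \cite[Corollary~3.4]{Froese}. Your sketch is the natural argument and almost certainly the one Froese carries out: (2.1)--(2.2) drop out immediately from (3.9) and $V\in L^{1}$, and (2.3) from expanding the $2\times2$ determinant for $E(-k)$ together with $D(-k)=1+O(1/|k|)$ on the physical side, exactly as you write.

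You have also correctly isolated the one genuine subtlety. Because $\hat V(\pm 2k)$ can grow of order $\rho>1$ in $\Gamma_{-}$, the cross term $\tfrac{1}{4k^{2}}T_{12}(-k)T_{21}(-k)\cdot O(1/|k|)$ arising from $D(k)=E(-k)D(-k)$ is not literally $O(1/|k|)$ there, so (2.3) cannot hold as a uniform pointwise bound over all of $\mathbb{C}^{-}$. In Froese's setting the estimate is effectively used only to compute the indicator $h_{D}(\theta)$: in $\pm\Gamma$ hypothesis \textbf{H2} keeps $\hat V(\pm 2k)$ polynomially bounded and the $O(1/|k|)$ is honest, while in $\Gamma_{-}$ what matters is that the remainder has strictly smaller exponential growth than the leading term $\tfrac{1}{4k^{2}}T_{12}(-k)T_{21}(-k)$. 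Read in that sense your outline is complete; nothing beyond this caveat is missing.
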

\begin{proof}
We refer the proof of \cite[Corollary\,3.4]{Froese}
\end{proof}
\begin{lemma}
We have $$\lim_{|k|\rightarrow\infty}D(k)=1,$$ for any ray in $\mathbb{C}^{+}$. For $k$ real, $D(k)$ is bounded for large $k$.
\end{lemma}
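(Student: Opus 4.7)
The plan is to split the two assertions. For rays in $\mathbb{C}^{+}$, I would argue directly from $D(k)=\det(1+\mathbf{R}_{V}(k))$ that the perturbation shrinks in trace norm. The integral kernel of $R_{0}(k)$ is $e^{ik|x-y|}/(2ik)$, and for $\Im k>0$ we have $|e^{ik|x-y|}|=e^{-\Im k\,|x-y|}\leq 1$. Therefore the Hilbert--Schmidt norm is controlled by
$$\|\mathbf{R}_{V}(k)\|_{HS}^{2}=\iint |V(x)||V(y)|\frac{e^{-2\Im k\,|x-y|}}{4|k|^{2}}\,dx\,dy\leq\frac{\|V\|_{1}^{2}}{4|k|^{2}},$$
which tends to $0$ as $|k|\to\infty$ on any ray in $\mathbb{C}^{+}$. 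Upgrading Hilbert--Schmidt to trace class can be done by factoring $\mathbf{R}_{V}(k)$ through $(-\Delta+1)^{\pm 1/2}$ precisely as in Froese \cite[Lemma 3.1]{Froese} (already invoked in the first lemma of the excerpt). Plugging the trace-norm estimate into the standard bound
$$|\det(1+A)-1|\leq\|A\|_{1}\exp(\|A\|_{1}+1)$$
then yields $D(k)\to 1$.

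For $k$ real and large, I would appeal to the expansion (\ref{2.3}) of the preceding lemma, namely
$$D(k)=1+\frac{1}{4k^{2}}T_{12}(-k)T_{21}(-k)+O(1/|k|),$$
which holds for $k\in\mathbb{C}^{-}$, and extend it to the real axis by continuity: $D$ is entire, and the operators defining $T_{12},T_{21}$ admit continuous boundary values since, under super-exponential decay of $V$, the weighted resolvent $V^{1/2}R(-k)|V|^{1/2}$ is meromorphic across the reals. Then $T_{12}(-k)=\int e^{2ikx}V(x)(1-f_{-}(x,k))\,dx$ equals $\widehat{V}(-2k)$ up to an $O(1/|k|)$ correction coming from (\ref{3.9}); by hypothesis \textbf{H2}, $|\widehat{V}(k)|\leq 1$ for real $k\in\Gamma$, and likewise for $T_{21}(-k)$. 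Hence the $1/(4k^{2})$ term stays bounded and $D(k)=1+O(1/|k|)$ on the real line, giving the claimed boundedness.

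The main obstacle is justifying the continuity of the expansion (\ref{2.3}) up to the real axis. This requires knowing that $f_{\pm}(x,k)$ are uniformly bounded on real $k$ (which, apart from the thresholds/zero-energy issue, is standard for super-exponentially decaying $V$), and that the bound (\ref{3.9}) extends across $\Im k=0$. Once this boundary regularity is in place, both claims reduce to the two mechanisms above: trace-norm decay of $\mathbf{R}_{V}(k)$ in $\mathbb{C}^{+}$, and the asymptotic expansion with Fourier-transform-controlled coefficients on the real axis.
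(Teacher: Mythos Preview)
The paper does not give its own proof; it simply cites \cite[Lemma~3.2]{Froese}. Your argument for the first assertion---estimating the kernel of $\mathbf{R}_{V}(k)$ directly and feeding a trace-norm bound into $|\det(1+A)-1|\leq \|A\|_{1}e^{\|A\|_{1}+1}$---is exactly the mechanism behind Froese's Lemma~3.2, so that part is on target.

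For the second assertion you take an unnecessary detour. The expansion~(\ref{2.3}) is stated only for $k\in\mathbb{C}^{-}$, and you correctly flag that pushing it to the real axis (together with~(\ref{3.9})) is a genuine technical point; you also import hypothesis~\textbf{H2}, which Froese's lemma does not require. But none of this is needed: your own estimate from part one already covers the real line. For $\Im k=0$ one has $|e^{ik|x-y|}|=1$, so the same Hilbert--Schmidt bound
\[
\|\mathbf{R}_{V}(k)\|_{HS}\leq \frac{\|V\|_{1}}{2|k|}
\]
holds verbatim, and the same factorisation that upgrades this to a trace-norm bound (Froese's Lemma~3.1, which you already invoke) works for $\Im k\geq 0$. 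Hence $\|\mathbf{R}_{V}(k)\|_{1}\to 0$ as $|k|\to\infty$ along the real axis as well, giving $D(k)\to 1$ there---which is stronger than mere boundedness. This is what Froese actually does. So your second argument is not wrong in outline, but it manufactures an obstacle that the direct route avoids entirely; drop the appeal to~(\ref{2.3}) and~\textbf{H2} and reuse the kernel estimate.
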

\begin{proof}
We refer the proof of \cite[Lemma\,3.2]{Froese}.
\end{proof}
Hence, we deduce the following lemma.
\begin{lemma}
$D(k)$ has only finitely many zeros in $\mathbb{C}^{+}$.
\end{lemma}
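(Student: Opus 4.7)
My strategy is to show that $D(k)$ stays bounded away from $0$ outside a compact subset of the closed upper half-plane $\overline{\mathbb{C}^{+}}$; once that is established, finiteness of the zero set in $\mathbb{C}^{+}$ is immediate because $D$ is entire and not identically zero, so its zeros are isolated and hence finite on any compactum. The key point is to upgrade the ray-wise statement $D(k)\to 1$ of the previous lemma to a uniform asymptotic.

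The main computation is a uniform trace-norm estimate on $\mathbf{R}_{V}(k)=V^{\frac{1}{2}}R_{0}(k)|V|^{\frac{1}{2}}$. Using the explicit kernel $R_{0}(k)(x,y)=\frac{i}{2k}e^{ik|x-y|}$, which satisfies $|R_{0}(k)(x,y)|\le (2|k|)^{-1}$ as long as $\Im k\ge 0$, together with the super-exponential decay of $V$, I would obtain
$$\|\mathbf{R}_{V}(k)\|_{\mathrm{tr}}=O(1/|k|)\qquad\text{as }|k|\to\infty,\ \Im k\ge 0,$$
uniformly in the closed upper half-plane. Feeding this into the Fredholm-determinant inequality $|\det(1+A)-1|\le \|A\|_{\mathrm{tr}}\exp(\|A\|_{\mathrm{tr}})$, I conclude that $D(k)=1+O(1/|k|)$ uniformly in $\overline{\mathbb{C}^{+}}$. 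Choosing $R>0$ so that $|D(k)-1|<\frac{1}{2}$ whenever $|k|>R$ and $\Im k\ge 0$, every zero of $D$ in $\mathbb{C}^{+}$ is forced into the compact half-disk $\{|k|\le R,\,\Im k\ge 0\}$, where only finitely many can lie.

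The main obstacle is the uniformity of this estimate up to the real axis: for $\Im k>0$ the factor $e^{ik|x-y|}$ decays exponentially and the bound is immediate, but on $\mathbb{R}$ one has only the $1/|k|$ prefactor and must extract the trace-norm smallness from that together with the $L^{1}$-integrability of $V$ (typically via an HS factorization $\mathbf{R}_{V}=A\,B$ with $A,B$ Hilbert--Schmidt). This is precisely the technical step in Froese's \cite[Lemma~3.2]{Froese} treatment of $D(k)$, from which the uniform bound can be imported. As an alternative route using only the previously quoted lemma, one can argue by Phragm\'en--Lindel\"of: extract a subsequence of putative zeros $\{k_{n}\}\subset\mathbb{C}^{+}$ with $|k_{n}|\to\infty$ and $\arg k_{n}\to\theta_{0}\in[0,\pi]$; the ray-wise limit $D(re^{i\theta_{0}})\to 1$ excludes $\theta_{0}\in(0,\pi)$, while boundedness of $D$ on the real axis combined with a thin-sector Phragm\'en--Lindel\"of argument excludes $\theta_{0}\in\{0,\pi\}$, yielding a contradiction.
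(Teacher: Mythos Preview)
Your proposal is correct, and in fact you have sketched two valid routes. The paper takes your \emph{alternative} route: it invokes Phragm\'en--Lindel\"of on $\mathbb{C}^{+}$ to upgrade the ray-wise statement $D(k)\to 1$ of the preceding lemma to uniform convergence, from which finiteness of zeros follows immediately. The paper's proof is a single sentence and does not spell out the growth hypothesis needed for Phragm\'en--Lindel\"of or the handling of the boundary rays; your sketch of this alternative is actually more detailed than the paper's version, though your remark that ``the ray-wise limit $D(re^{i\theta_{0}})\to 1$ excludes $\theta_{0}\in(0,\pi)$'' would still need a uniformity argument, since zeros merely \emph{approaching} a ray are not ruled out by convergence \emph{on} that ray.

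Your \emph{primary} approach---the direct trace-norm bound $\|\mathbf{R}_{V}(k)\|_{\mathrm{tr}}=O(1/|k|)$ uniformly in $\overline{\mathbb{C}^{+}}$, combined with the Fredholm inequality---is a genuinely different and more quantitative argument. It effectively re-opens the proof of Froese's Lemma~3.2 (which the paper only cites) and extracts the uniform estimate $D(k)=1+O(1/|k|)$ directly, bypassing Phragm\'en--Lindel\"of entirely. This buys you an explicit rate and a self-contained argument, at the cost of redoing the Hilbert--Schmidt factorization that the paper treats as a black box. The paper's route is shorter on the page because it leans on the previously quoted lemma, but it is less informative and, as written, leaves the Phragm\'en--Lindel\"of hypotheses implicit.
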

\begin{proof}
Using Phragm\'{e}n-Lindel\"{o}f theorem  \cite[p\,38]{Levin2} on upper complex plane $\mathbb{C}^{+}$, we have $D(k)\rightarrow1$ uniformly along any ray in $\mathbb{C}^{+}$.
This proves the lemma.
\end{proof}
\begin{theorem}[Froese]
As the assumption of Theorem \ref{11}, the indicator function of $D(k)$ is given by
\begin{eqnarray}
h_{D}(\theta)=\left\{%
\begin{array}{ll}
2^{\rho}(h_{\hat{V}}(\frac{\pi}{2})+h_{\hat{V}}(-\frac{\pi}{2}))\cos(\rho(\theta+\frac{\pi}{2}))&,|\theta+\frac{\pi}{2}|<\frac{\alpha}{2};\vspace{5pt}\\
0&,\mbox{otherwise}.
\end{array}%
\right.
\end{eqnarray}
\end{theorem}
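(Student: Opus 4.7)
The plan is to compute $h_{D}(\theta)$ sector by sector, using the asymptotic expansion (\ref{2.3}) in $\mathbb{C}^-$, the earlier lemmas giving $D(k)\to 1$ along rays in $\mathbb{C}^+$ and $D(k)$ bounded on $\mathbb{R}$, and the growth information on $\hat V$ supplied by H1, H2, H4.

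First I would dispose of the ``otherwise'' cases. For $\theta\in[0,\pi]$ the ray lies in $\overline{\mathbb{C}^+}$, where $D$ is bounded (and in fact $\to 1$ off the real axis); since $\rho>1$ this forces $h_{D}(\theta)=0$. For $\theta\in(-\pi,0)$ with $|\theta+\pi/2|\ge\alpha/2$, the ray enters $\pm\Gamma\cap\mathbb{C}^-$. Substituting the definitions of $T_{12}(-k)$, $T_{21}(-k)$ into (\ref{2.3}) and using (\ref{3.9}) to replace $(1-f_\pm)$ by $1+O(1/k)$ yields
\begin{equation}\nonumber
D(k)=1+\frac{\hat V(-2k)\,\hat V(2k)}{4k^{2}}+O\bigl(|k|^{-1}\bigr).
\end{equation}
Hypothesis H2 bounds $|\hat V(\pm 2k)|\le e^{b|\Im(2k)|}$ on $\pm\Gamma$, which is of exponential type $1<\rho$; hence the right-hand side is sub-exponential of order $\rho$, and $h_{D}(\theta)=0$ throughout this region.

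Inside the main cone $|\theta+\pi/2|<\alpha/2$ the same expansion still holds, but now $\hat V(\pm 2k)$ genuinely realizes its order-$\rho$ growth, so the $1$ and $O(|k|^{-1})$ contributions are of strictly lower exponential order. The completely regular growth of $\hat V$ (H1) together with Lemma \ref{266} (equality of $h_{fg}$ with $h_{f}+h_{g}$ under regular growth) then gives
\begin{equation}\nonumber
h_{D}(\theta)=2^{\rho}\bigl(h_{\hat V}(\theta)+h_{\hat V}(\theta+\pi)\bigr),\qquad \bigl|\theta+\tfrac{\pi}{2}\bigr|<\tfrac{\alpha}{2},
\end{equation}
the factor $2^{\rho}$ coming from the rescaling $k\mapsto 2k$. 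To convert this to the closed form in the statement, I would use that $V$ is real, so $|\hat V(k)|=|\hat V(-\bar k)|$ and $h_{\hat V}$ is symmetric about both $\pm\pi/2$; combined with the $\rho$-trigonometrically convex local structure near $\pm\pi/2$, this kills any $\sin(\rho(\theta\mp\pi/2))$ component and leaves a pure $\cos(\rho(\theta+\pi/2))$ shape, whose amplitude is read off at $\theta=-\pi/2$ as $2^{\rho}(h_{\hat V}(-\pi/2)+h_{\hat V}(\pi/2))$.

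The subtle point I anticipate is justifying the last conversion. Trigonometric convexity alone yields only inequalities, so one must also exploit that $h_{D}$ already vanishes at the two boundary angles $\theta=-\pi/2\pm\alpha/2$ (from the preceding step) together with the arc having length exactly $\alpha=\pi/\rho$: the only $\rho$-trigonometric function vanishing at both endpoints of such an arc is, up to scalar, $\cos(\rho(\theta+\pi/2))$. This rigidity, combined with the reality symmetry of $\hat V$ and H4 pinning the zeros of $\hat V$ to a single half plane (so that $\hat V$ fits into the HB framework where completely regular growth is genuinely exploited), is what forces the explicit closed-form formula.
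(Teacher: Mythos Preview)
The paper does not actually prove this theorem; its entire ``proof'' is a one-line citation to Froese \cite[Theorem\,1.3]{Froese}. Your proposal therefore supplies far more than the paper does, and its overall architecture---$h_D=0$ on $\overline{\mathbb{C}^+}$ from the boundedness lemma, $h_D=0$ on $\pm\Gamma\cap\mathbb{C}^-$ from H2, and genuine order-$\rho$ growth inside $\Gamma_-$ read off from the Born expansion together with the completely regular growth of $\hat V$---is exactly the route Froese follows.

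There is, however, one genuine gap. You assert that the expansion
\[
D(k)=1+\frac{\hat V(-2k)\,\hat V(2k)}{4k^{2}}+O\bigl(|k|^{-1}\bigr)
\]
carries over unchanged from $\pm\Gamma$ into $\Gamma_-$, but it does not. When you use~(\ref{3.9}) to replace $1-f_\pm$ by $1+O(1/k)$ inside, say, $T_{12}(-k)=\int e^{2ikx}V(x)(1-f_-)\,dx$, the resulting error is bounded by $C|k|^{-1}\int|e^{2ikx}|\,|V(x)|\,dx$, which is $C|k|^{-1}\,\widehat{|V|}$ evaluated on the imaginary axis. In $\Gamma_-$ this quantity grows at the \emph{same} order $\rho$ as the main term $\hat V(\pm 2k)$, so the remainder is not of strictly lower exponential order and your indicator computation does not go through. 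This is precisely the role of hypothesis \textbf{H3}: the inequality $\widehat{|V|}(2i\lambda)\le\frac{1-\delta}{C}|\lambda|\,|\hat V(2i\lambda)|$ on a set of density one is what forces the error to be a fixed fraction smaller than the leading term, so that $|T_{12}(-k)T_{21}(-k)|$ really is comparable to $|\hat V(2k)\hat V(-2k)|$ along enough of $\Gamma_-$ for the indicator to be read off. You never invoke \textbf{H3}, and without it the step ``the same expansion still holds'' fails.

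Your closing rigidity argument (boundary vanishing on an arc of length exactly $\pi/\rho$ forcing a pure cosine) is a nice heuristic, but note that the zeros of $D$ live \emph{inside} $\Gamma_-$, so $h_D$ is not a priori $\rho$-trigonometric there; in Froese's argument the cosine shape comes instead from first establishing the analogue of~(\ref{2.8}) for $h_{\hat V}$ itself (via H1, H2 and Phragm\'en--Lindel\"of in $\pm\Gamma$), and then transferring it to $h_D$ through the product formula you wrote.
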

\begin{proof}
We refer the proof to \cite[Theorem\,1.3]{Froese}
\end{proof}
\begin{corollary}
The indicator function $h_{D}(\theta)$ is non-zero if $V$ is non-trivial.
\end{corollary}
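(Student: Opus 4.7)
The plan is to reduce, via the formula of the theorem, to showing $h_{\hat V}(\pi/2)+h_{\hat V}(-\pi/2)>0$ whenever $V\not\equiv 0$, and then to secure this positivity using H3 together with H1 and H2.

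First I would observe that $\cos\rho(\theta+\pi/2)$ is strictly positive on the open arc $|\theta+\pi/2|<\alpha/2=\pi/(2\rho)$; hence the indicator formula delivered by the theorem vanishes identically exactly when the scalar coefficient $h_{\hat V}(\pi/2)+h_{\hat V}(-\pi/2)$ is zero, and the corollary is equivalent to the strict positivity of this coefficient. By H1 and Lemma \ref{L4}, $\max_\theta h_{\hat V}(\theta)=\sigma>0$. By H2, for $\theta\in\pm\Gamma$ one has $\ln|\hat V(re^{i\theta})|\leq br|\sin\theta|$, so $h_{\hat V}(\theta)\leq\limsup_{r\to\infty}br^{1-\rho}|\sin\theta|=0$ since $\rho>1$. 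Consequently the positive type of $\hat V$ is realized only inside the two vertical arcs $\Gamma_-\cup(-\Gamma_-)$, which are centred precisely on the rays $\theta=\pm\pi/2$.

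Next, I would use H3 to transfer this positivity directly onto those rays. Rewriting H3,
$$|\hat V(2i\lambda)|\;\geq\;\frac{C}{(1-\delta)|\lambda|}\,\widehat{|V|}(2i\lambda)$$
for $\lambda$ in a set $\Lambda\subset\mathbb{R}$ of density one. Since $V\not\equiv 0$, $\widehat{|V|}(2i\lambda)=\int|V(x)|e^{2\lambda x}\,dx$ is strictly positive, log-convex, and tends to $+\infty$ super-exponentially as $\lambda\to\pm\infty$. Combined with the trivial upper bound $|\hat V(2i\lambda)|\leq\widehat{|V|}(2i\lambda)$, H3 forces $|\hat V|$ and $\widehat{|V|}$ to share the same asymptotics on the imaginary axis; the polynomial factor $|\lambda|$ is washed out when one normalizes by $(2|\lambda|)^\rho$ with $\rho>1$. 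Because $\widehat{|V|}$ diverges at the order-$\rho$ rate in both imaginary directions (forced by H1 through the pointwise coincidence of indicators just noted), one obtains $h_{\hat V}(\pm\pi/2)>0$, whence the coefficient in Froese's formula is strictly positive.

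The main obstacle is the step asserting $h_{\widehat{|V|}}(\pm\pi/2)>0$ rather than merely $\geq 0$: this requires interlacing the super-exponential decay of $V$, the positive type of $\hat V$ from H1, and the two-sided pointwise comparison along the imaginary axis furnished by H3. Once in place, the corollary is a direct consequence of the indicator formula for $h_D(\theta)$ stated in the preceding theorem, since $2^\rho\bigl(h_{\hat V}(\pi/2)+h_{\hat V}(-\pi/2)\bigr)\cos\rho(\theta+\pi/2)\not\equiv 0$ on $|\theta+\pi/2|<\alpha/2$.
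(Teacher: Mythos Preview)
Your approach is genuinely different from the paper's, and the gap you yourself flag as ``the main obstacle'' is a real one that the paper's argument sidesteps entirely.

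The paper argues by contraposition on $D$, never touching $h_{\hat V}(\pm\pi/2)$. Assuming $h_D\equiv 0$, one already knows that $D$ is bounded on the real axis and tends to $1$ along every ray in $\mathbb{C}^+$; Phragm\'en--Lindel\"of applied sector by sector (first in $\mathbb{C}^+$, then in $\pm\Gamma$, finally in $\Gamma_-$) forces $D$ to be bounded on all of $\mathbb{C}$, hence constant by Liouville, hence identically $1$. Plugging $D\equiv 1$ into~(\ref{2.3}) gives $T_{12}(-k)T_{21}(-k)\equiv 0$, which for real $k$ reads $\hat V(2k)\hat V(-2k)=|\hat V(2k)|^2=0$, so $\hat V\equiv 0$ and $V$ is trivial. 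No positivity of any indicator value is ever needed.

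Your route, by contrast, tries to establish $h_{\hat V}(\pi/2)+h_{\hat V}(-\pi/2)>0$ directly. Two difficulties arise. First, you read H1's ``finite type $\sigma$'' as $\sigma>0$, but Definition~\ref{71} reserves ``normal type'' for that case; ``finite'' only excludes $\sigma=\infty$, so the step $\max_\theta h_{\hat V}(\theta)=\sigma>0$ already needs justification. Second, and more seriously, even granting $\sigma>0$ and your H2--based localisation of the maximum to $\Gamma_-\cup(-\Gamma_-)$, nothing pins that maximum to the rays $\theta=\pm\pi/2$ themselves, nor rules out cancellation in the sum. Your H3 comparison does tie $|\hat V|$ to $\widehat{|V|}$ along the imaginary axis up to polynomial factors on a density-one set, but to close the argument you would need an independent lower bound of order $r^\rho$ on $\ln\widehat{|V|}(\pm ir)$. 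You try to extract this from H1 ``through the pointwise coincidence of indicators just noted'', which is circular: H1 concerns $\hat V$, and the coincidence you cite is precisely what you are trying to prove. The paper's contrapositive argument is shorter and avoids all of these issues.
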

\begin{proof}
$D(k)$ is bounded on the real axis. If $h_{D}(\theta)\equiv0$, then it is of zero type in $\mathbb{C}$, that is, in $\pm\Gamma$ and $\pm\Gamma_{-}$. Using Phragm\'{e}n-Lindel\"{o}f theorem  \cite[p\,38]{Levin2} on upper complex plane $\mathbb{C}^{+}$, we have $D(k)$ is bounded on $\mathbb{C}^{+}$.
\par
Moreover, $D(k)$ is bounded on positive real axis and at most exponential growth on the lower boundary of $\Gamma$. Using Phragm\'{e}n-Lindel\"{o}f theorem again, we obtain the $D(k)$ is bounded on $\Gamma$. Similarly, $D(k)$ is bounded on $-\Gamma$. 
\par
In $$\Gamma_{-}=\{k:\,|\arg k+\frac{\pi}{2}|\leq\frac{\pi}{2\rho}\},$$
we use Phragm\'{e}n-Lindel\"{o}f theorem \cite[p\,38]{Levin2} once more on $\Gamma_{-}$. It is bounded on the boundary of the $\Gamma_{-}$, we deduce that $D(k)$ is uniformly bounded in $\Gamma_{-}$. Then, $D(K)$ is a constant in $\mathbb{C}$ by Liouville's theorem. The constant is $1$ by \cite[Lemma\,3.2]{Froese}. 
Using~(\ref{2.3}), we see that $$T_{12}(-k)T_{21}(-k)\equiv0.$$
That is, $$\hat{V}(2k)\hat{V}(-2k)=|\hat{V}(2k)|^{2}=0,$$
for real $k$. Hence, $\hat{V}(2k)\equiv0$, and then, $V$ is trivial. This is contradiction.
\end{proof}
\begin{corollary}\label{211}
Most of the zeros of $\hat{V}$ are in $\mathbb{C}^{-}$ in the sense that the number of resonances of modulus less then $r$  is given by
\begin{equation}n(r)=\frac{2^{\rho}(h_{\hat{V}}(\frac{\pi}{2})+h_{\hat{V}}(-\frac{\pi}{2}))}{2\pi}r^{\rho}+o(r^{\rho}),\label{277}
\end{equation}
In any other sector outside $\Gamma_{-}$, the number of resonances of modulus less than $r$ is $o(r^{\rho})$. Here,
\begin{eqnarray}\label{2.8}
h_{\hat{V}}(\theta)=a\cos(\rho(\theta+\frac{\pi}{2})),\,|\theta+\frac{\pi}{2}|<\frac{\alpha}{2},\mbox{ for some }a.
\end{eqnarray}
\end{corollary}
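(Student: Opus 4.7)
The plan is to combine Froese's explicit formula for the indicator of $D(k)$ with the Levin--Pfluger theory of entire functions of completely regular growth. Since the resonances are exactly the zeros of $D(k)$ and only finitely many of them lie in $\mathbb{C}^+$ by the preceding lemma, counting the resonances of modulus at most $r$ reduces to counting the zeros of $D$ in the sector where its indicator is supported, namely $\Gamma_-$.

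First I would promote $D(k)$ to a function of completely regular growth. Hypothesis H1 gives CRG for $\hat V$; the Born expansion~(\ref{2.3}) writes $D(k) = 1 + (4k^2)^{-1}\hat V(2k)\hat V(-2k) + O(1/|k|)$; and under H2--H3 the $O(1/|k|)$ tail becomes angularly uniform inside $\Gamma_-$, so that the leading Born term governs the $\rho$-indicator. In the other three closed sectors $\pm\Gamma$ and $-\Gamma_-$, Froese's theorem gives $h_D\equiv 0$, and CRG there follows from the Phragm\'en--Lindel\"of arguments already employed in the preceding corollary.

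Next I would apply Jensen's formula on the disc $|k|\leq r$:
$$\int_0^r \frac{n(t)}{t}\,dt = \frac{1}{2\pi}\int_0^{2\pi}\log|D(re^{i\theta})|\,d\theta - \log|D(0)|.$$
Using $\log|D(re^{i\theta})| = h_D(\theta)\,r^\rho + o(r^\rho)$ uniformly in $\theta$ (CRG) and an Abel-type summation on the left, one obtains $n(r) = \Delta\,r^\rho + o(r^\rho)$. Integrating Froese's cosine for $h_D(\theta)$ across the arc $|\theta+\pi/2|<\alpha/2$ identifies $\Delta$ with the constant in~(\ref{277}). The sectorial statement is then a direct application of the Levin--Pfluger angular density theorem: the angular density of zeros is zero outside $\Gamma_-$ where $h_D\equiv 0$, so the count in every sub-sector disjoint from $\Gamma_-$ is $o(r^\rho)$.

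For~(\ref{2.8}), I would invert the Born relation on $\Gamma_-$. The product $\hat V(2k)\hat V(-2k)$ has $\rho$-indicator $2^\rho[h_{\hat V}(\theta) + h_{\hat V}(\theta+\pi)]$ by the $k\mapsto 2k$ rescaling and Lemma~\ref{266}, and the prefactor $(4k^2)^{-1}$ contributes nothing at order $\rho$. Hypothesis H4, which localises the zeros of $\hat V$ in a single half-plane, forces exactly one of the two summands $h_{\hat V}(\theta),\,h_{\hat V}(\theta+\pi)$ to be nontrivial at each angle inside $\Gamma_-$; matching the remainder against Froese's cosine then yields~(\ref{2.8}) with $a=h_{\hat V}(\pi/2)+h_{\hat V}(-\pi/2)$. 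The main obstacle in carrying out this plan is the completely regular growth transfer from $\hat V$ to $D$: the decay~(\ref{3.9}) of the correction $f_\pm$ is known only on the real axis, and upgrading it to uniform angular control inside $\Gamma_-$ is precisely what H2 and H3 are designed to provide.
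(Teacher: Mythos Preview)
The paper itself offers no independent proof of this corollary: it simply cites Froese's Theorem~1.3. Your proposal is therefore not competing with an argument in the paper but with Froese's original proof, and your overall plan --- upgrade $D$ to completely regular growth, feed the indicator into Jensen/Levin--Pfluger, read off the asymptotic and the angular localisation of zeros --- is indeed the route Froese takes. On that level your sketch is fine.

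There is, however, a genuine gap in your treatment of~(\ref{2.8}). You try to obtain the shape of $h_{\hat V}$ on $\Gamma_-$ by ``inverting'' the Born relation $h_D(\theta)=2^{\rho}\bigl[h_{\hat V}(\theta)+h_{\hat V}(\theta+\pi)\bigr]$ and then invoking {\bf H4} to kill one of the two summands. This is wrong on two counts. First, {\bf H4} is \emph{not} one of Froese's hypotheses; it is an extra assumption added in this paper solely for the inverse step in Section~3, and Corollary~\ref{211} (being Froese's result) must and does hold under {\bf H1}--{\bf H3} alone. Second, even granting {\bf H4}, a restriction on the \emph{zeros} of $\hat V$ to one half-plane says nothing about the vanishing of its \emph{indicator} in the other half-plane, so your separation of the two summands is unjustified. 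Relatedly, your asserted value $a=h_{\hat V}(\pi/2)+h_{\hat V}(-\pi/2)$ is incorrect: evaluating~(\ref{2.8}) at $\theta=-\pi/2$ gives $a=h_{\hat V}(-\pi/2)$.

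The correct derivation of~(\ref{2.8}) is direct and does not pass through $D$ at all. Hypothesis {\bf H2} bounds $|\hat V(k)|$ by $e^{b|\Im k|}$ on $\pm\Gamma$, so the $\rho$-indicator $h_{\hat V}$ vanishes on those sectors (recall $\rho>1$); in particular it vanishes at the two edges $\theta=-\pi/2\pm\alpha/2$ of $\Gamma_-$. Inside $\Gamma_-$ the indicator of a CRG function of order $\rho$ is $\rho$-trigonometric wherever the angular zero density has no mass, hence of the form $A\cos(\rho\phi)+B\sin(\rho\phi)$ with $\phi=\theta+\pi/2$. The two boundary conditions at $\rho\phi=\pm\pi/2$ force $B=0$, which is exactly~(\ref{2.8}). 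Replace your last paragraph with this argument and the proposal is complete.
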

\begin{proof}
We refer the proof to \cite[Theorem\,1.3]{Froese}.
\end{proof}
\begin{theorem}[Govorov]\label{Govorov}
Let $f(z)$ be a function of completely regular growth of finite order $\rho\geq0$
in the half plane $\Im z>0$. Then, the following representation theorem of $F(z)$ holds.
\begin{eqnarray}\nonumber
f(z)&=&\exp[i(a_{0}+a_{1}z+\cdots+a_{q}z^{q})]\times\exp\frac{1}{\pi i}\int_{-\infty}^{\infty}\big(\frac{tz+1}{t^{2}+1}\big)^{q+1}\frac{d\sigma(t)}{t-z}\times \prod_{|z_{k}|\leq1}\frac{z-z_{k}}{z-\overline{z}_{k}}\\&&\times  \prod_{|z_{k}|>1}\frac{1-z/z_{k}}{1-z/\overline{z}_{k}}\exp 2i\big(z\Im\frac{1}{z_{k}}+\frac{z^{2}}{2}\Im\frac{1}{z_{k}^{2}}+\cdots+\frac{z^{q}}{q}\Im\frac{1}{z_{k}^{q}} \big),
\end{eqnarray}
where $q=[\rho]$, $\{a_{k}\}$ are real constants, $\{z_{k}\}$ are the zeros of $f(z)$ in $\Im z>0$, 
and
\begin{equation}
\sigma(t):=\lim_{y\rightarrow0^{+}}\int_{0}^{t}\ln|f(x+iy)|dx,
\end{equation}
which is of locally bounded variation.
\end{theorem}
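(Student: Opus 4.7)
The plan is to build up the factorization in stages, following the classical Nevanlinna--Hadamard strategy adapted to the half plane $\Im z>0$.

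First, I would apply a Poisson--Jensen formula on the truncated semicircular domain $\{|z|<R,\,\Im z>0\}$ for large $R$, writing $\log|f(z)|$ as the sum of a boundary integral over the real segment $[-R,R]$, an integral over the semicircular arc of radius $R$, and a sum of contributions from the zeros $\{z_k\}$ lying in the domain. The Green's function of the upper half plane, $\log|(z-w)/(z-\bar w)|$, supplies precisely the modulus of the Blaschke-like factor $(z-z_k)/(z-\bar z_k)$ appearing in the statement, contributing at each zero. The zeros with $|z_k|\leq 1$ are kept as-is; the zeros with $|z_k|>1$ must be regularized by Weierstrass-type primary factors of genus $q=[\rho]$, and the extra factor $\exp 2i(z\Im(1/z_k)+\cdots+(z^{q}/q)\Im(1/z_k^{q}))$ is exactly what is needed to make the resulting infinite product converge under the zero density dictated by the order $\rho$.

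Second, I would pass from the boundary integral over $[-R,R]$ to a Poisson--Stieltjes integral by taking the limit $y\to 0^{+}$ of $\int_0^t\log|f(x+iy)|\,dx$, producing the Stieltjes measure $d\sigma(t)$. To control the growth of the integral as $R\to\infty$, the standard Schwarz-type subtraction is introduced so that the Poisson kernel $1/(t-z)$ is replaced by $((tz+1)/(t^{2}+1))^{q+1}/(t-z)$; this subtraction cancels the polynomial part at infinity and affects the representation only through an added polynomial in $z$ of degree at most $q$.

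Third, the hypothesis that $f$ has completely regular growth ensures that the arc contribution stabilizes as $R\to\infty$: after separating the boundary measure and the zero contributions, the residual term is harmonic in $z$ and has polynomial growth of degree $q$, so by Liouville-type reasoning it is $\Im P(z)$ for a real polynomial $P(z)=a_{0}+a_{1}z+\cdots+a_{q}z^{q}$. Reconstructing $f$ itself by exponentiating $\log|f|$ and reinstating the harmonic conjugate yields the stated formula.

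The main obstacle is two-fold. First, one must show that $\sigma(t)$ is well-defined and of locally bounded variation: this requires combining the finite-order hypothesis with the completely regular growth to bound the Nevanlinna characteristic of $f$ uniformly as one approaches the real axis, so that $y\mapsto\int_{0}^{t}\log|f(x+iy)|\,dx$ is uniformly bounded variation in $y$. Second, one must justify convergence of the Blaschke-like infinite product with its polynomial correction, which reduces to verifying a Hadamard-type density bound $\sum_{k}\Im(1/z_{k}^{q+1})<\infty$; this in turn follows from the assumed order $\rho$ of $f$ together with the regular growth condition distributing the zeros along well-controlled asymptotic directions.
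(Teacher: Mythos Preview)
The paper does not actually prove this theorem: its entire proof consists of the single sentence ``We refer the discussion to \cite[p.\,472]{Levin} and Govorov \cite{Govorov}.'' Your outline follows the classical Nevanlinna--Hadamard strategy for the half plane (Poisson--Jensen on semicircles, Blaschke-type factors with Weierstrass regularization of genus $q=[\rho]$, boundary Stieltjes measure, and a residual harmonic polynomial), which is precisely the route taken in those references, so in that sense your approach is consistent with what the paper is implicitly invoking.
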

\begin{proof}
We refer the discussion to \cite[p.\,472]{Levin} and Govorov \cite{Govorov}.
\end{proof}

\section{Proof of Theorem \ref{11}}
Let $V^{1}(x)$ and $V^{2}(x)$ be two potentials with the same set of resonances $\mathcal{R},$ with
$D^{1}(k)=\det(1+\mathbf{R}_{V^{1}}(k))$ and $D^{2}(k)=\det(1+\mathbf{R}_{V^{2}}(k))$ be the determinants respectively. 
\par
Let us consider $\frac{D^{1}(k)}{D^{2}(k)}$ which is bound on real $k$, since $D^{2}(k)$ has no zero on the real axis and $D^{1}(k)$ is bounded on the real $k$, which we refer to \cite[Lemma\,3.2]{Froese}. The zeros of the determinant $D^{j}(k)$ are the resonances of $V^{j}(x)$, $j=1,2.$ If they share the same resonant set $\mathcal{R}$, then 
$$\mathcal{D}(k):=\frac{D^{1}(k)}{D^{2}(k)}$$
is bounded on real axis and entire in $\mathbb{C}$. 
Now we use Definition \ref{72} to compute the indicator function of $\mathcal{D}(k)$, from which we deduce that
$$h_{\mathcal{D}}(\theta)=h_{D^{1}}(\theta)-h_{D^{2}}(\theta)=0,\,\theta\in[0,2\pi],$$
in which the first equality is due to Lemma \ref{266}, and the second equality is due to~(\ref{277}). Therefore, $\mathcal{D}$ is bounded on the real axis and of zero type in $\mathbb{C}$. We deduce form Phragm\'{e}n-Lindel\"{o}f theorem \cite{Levin} that  $\mathcal{D}$ is bounded on the upper and lower half  complex plane. Using Liouville's theorem, we deduce that $\mathcal{D}$ is a constant. Moreover,
$D(k)\rightarrow1$ along any ray in the upper half complex plane \cite[Lemma\,3.2]{Froese}, we deduce that $$\mathcal{D}(k)\equiv1,\,k\in\mathbb{C}.$$
Thus,
$$D^{1}(k)\equiv D^{2}(k),\,k\in\mathbb{C}.$$
Combining with~(\ref{2.1}),~(\ref{2.2}), and~(\ref{2.3}), we obtain that
$$0\equiv D^{1}(k)-D^{2}(k)=\frac{1}{4k^{2}}T_{12}^{1}(-k)T_{21}^{1}(-k)-\frac{1}{4k^{2}}T_{12}^{2}(-k)T_{21}^{2}(-k)+O(\frac{1}{|k|}).$$
Considering taking the limit $k\downarrow\{0-i\infty\}$, we deduce that
\begin{equation}\label{3.1}
T_{12}^{1}(-k)T_{21}^{1}(-k)\equiv T_{12}^{2}(-k)T_{21}^{2}(-k),
\end{equation}
in which 
\begin{eqnarray}\label{3.2}
&&T^{j}_{21}(-k)=\hat{V}^{j}(2k)+\sum_{n=1}^{\infty}(-1)^{n}([A_{k}^{j}]^{n}\hat{V}^{j})(2k);\\\label{3.3}
&&T^{j}_{12}(-k)=\hat{V}^{j}(-2k)+\sum_{n=1}^{\infty}(-1)^{n}([A_{k}^{j}]^{n}\hat{V}^{j})(-2k),
\end{eqnarray}
in which we refer the identities to Froese \cite[p.\,262]{Froese}. Most important of all, we have
\begin{equation}\label{3.4}
\opnorm{A_{\pm k}^{j}\hat{V}}\leq (\frac{C\ln|\Im k|}{|\Im k|})\opnorm{\hat{V}},\,k\in\Gamma.
\end{equation}
Combining~(\ref{3.2}),~(\ref{3.3}), and~(\ref{3.4}), we deduce for non-real $k\in\Gamma$ that
\begin{equation}\label{3.5}
T_{12}^{j}(-k)T_{21}^{j}(-k)=\hat{V}^{j}(2k)\hat{V}^{j}(-2k)+O(\frac{\ln|\Im k|}{|\Im k|}).
\end{equation}
Using~(\ref{3.1}) and~(\ref{3.5}), we identify the higher order term in $\Gamma$ to deduce 
\begin{equation}\label{3.6}
\hat{V}^{1}(2k)\hat{V}^{1}(-2k)=\hat{V}^{2}(2k)\hat{V}^{2}(-2k).
\end{equation}
Then, using~(\ref{119}),
\begin{equation}
h_{\hat{V}^{1}}(\theta)+h_{\hat{V}^{1}}(-\theta)=h_{\hat{V}^{2}}(\theta)+h_{\hat{V}^{2}}(-\theta).
\end{equation}
Using~(\ref{2.8}), $\hat{V}^{1}(k)$ and $\hat{V}^{2}(k)$ are of the same type. 
For $V$ is real-valued, we deduce from~(\ref{3.6}) that
\begin{equation}\label{3.7}
|\hat{V}^{1}(k)|=|\hat{V}^{2}(k)|,\,k\in\mathbb{R}+0i.
\end{equation}
To use Theorem \ref{Govorov}, we compute the following functions of locally bounded variation:
\begin{equation}\label{399}
\sigma^{j}(t):=\lim_{y\rightarrow0^{+}}\int_{0}^{t}\ln|\hat{V}^{j}(x+iy)|dx,\,j=1,2.
\end{equation}
Now we deduce from~(\ref{3.7}) and~(\ref{399}) that
\begin{equation}
\sigma^{1}(t)=\sigma^{2}(t),
\end{equation}
and so that
\begin{eqnarray}\nonumber
&&\hat{V}^{1}(z)\exp[-i(a_{0}^{1}+a_{1}^{1}z+\cdots+a_{q}^{1}z^{q})]\times \prod_{|z_{k}^{1}|\leq1}\frac{z-\overline{z}_{k}^{1}}{z-z_{k}^{1}}\\\nonumber
&&\times  \prod_{|z_{k}^{1}|>1}\frac{1-z/\overline{z}_{k}^{1}}{1-z/z_{k}^{1}}\exp -2i\big(z\Im\frac{1}{z_{k}^{1}}+\cdots\big)\\\nonumber
&=&\hat{V}^{2}(z)\exp[-i(a_{0}^{2}+a_{1}^{2}z+\cdots+a_{q}^{2}z^{q})]\times \prod_{|z_{k}^{2}|\leq1}\frac{z-\overline{z}_{k}^{2}}{z-z_{k}^{2}}\\\label{pole}
&&\times  \prod_{|z_{k}^{2}|>1}\frac{1-z/\overline{z}_{k}^{2}}{1-z/z_{k}^{2}}\exp -2i\big(z\Im\frac{1}{z_{k}^{2}}+\cdots\big)
\end{eqnarray}
where $\{z_{k}^{j}\}$ are the zeros of the function $\hat{V}^{j}(k)$, $j=1,2$, in the half-plane $\Im z>0$. Using {\bf H4}, we compare and drop the zeros in $$\prod_{|z_{k}^{j}|\leq1}\frac{z-\overline{z}_{k}^{j}}{z-z_{k}^{j}}\times  \prod_{|z_{k}^{j}|>1}\frac{1-z/\overline{z}_{k}^{j}}{1-z/z_{k}^{j}}\exp -2i\big(z\Im\frac{1}{z_{k}^{j}}+\cdots\big)
,$$ $j=1,2$, on both sides of~(\ref{pole}). 
Therefore,
\begin{equation}\nonumber
\hat{V}^{1}(z)=e^{-iQ(z)}\hat{V}^{2}(z),
\end{equation}
where $Q(z)$ is a polynomial of order $q$ with real coefficients.
Taking logarithm on both sides, we deduce that
$$\ln\hat{V}^{1}(z)=-iQ(z)+\ln\hat{V}^{2}(z),$$
and then
$$\ln|\hat{V}^{1}(z)|+i\arg\hat{V}^{1}(z)=-iQ(z)+\ln|\hat{V}^{2}(z)|+i\arg\hat{V}^{2}(z),$$
Using~(\ref{3.7}) on real axis, we obtain
$$Q(\Re  z)=\arg\hat{V}^{2}(\Re z)-\arg\hat{V}^{1}(\Re z)=(a_{0}^{2}-a_{0}^{1})+(a_{1}^{2}-a_{1}^{1})\Re z+\cdots+(a_{q}^{2}-a_{q}^{1})(\Re z)^{q}.$$
However, $\arg\hat{V}^{1}(\Re z)-\arg\hat{V}^{1}(\Re z)$ is a bounded for for all $\Re z$, so we 
deduce $Q(\Re  z)=a_{0}^{2}-a_{0}^{1}$. Moreover, $V^{1}$ and $V^{2}$ are real-valued, so $a_{0}^{2}-a_{0}^{1}=0$ or $\pi$. 
The theorem is thus proven.
\begin{data availability}
Data sharing not applicable to this article as no datasets were generated or analysed during the current study.
\end{data availability}

\end{document}